\newtheorem{theorem}{Theorem}
\newtheorem{lemma}[theorem]{Lemma}
\newtheorem{remark}[theorem]{Remark}
\newtheorem{proposition}[theorem]{Proposition}
\newtheorem{corollary}[theorem]{Corollary}
\definecolor{darkgreen}{rgb}{0.03, 0.5, 0.03}
\begin{document}

\title[]{A ``$v$-operation free'' approach to  Pr\"ufer $v$-multiplication domains}
\author[ ]{Marco Fontana and Muhammad Zafrullah}

\subjclass[2000]{13F05, 13A15, 13G05}
\keywords{Pr\"ufer domain, Krull domain, $v$-domain, star operation}
\date{\today}
\address{M.F.: \ Dipartimento di Matematica, Universit\`a degli Studi
``Roma Tre'', 00146 Rome, Italy.}
\email{fontana@mat.uniroma3.it }
\address{M.Z.: \ 57 Colgate Street,
Pocatello, ID 83201-34, Idaho, USA.}
\email{zafrullah@lohar.com}

\begin{abstract} The so called Pr\"ufer $v$-multiplication domains (P$v$MD's) are usually defined as domains whose finitely generated nonzero ideals are $t$-invertible. These domains generalize Pr\"ufer domains and Krull domains. The P$v$MD's are relatively obscure compared to their very well known special cases. One of the reasons could be that the study of P$v$MD's uses the jargon of star operations, such as the $v$-operation and the $t$-operation. In this paper, we provide characterizations of and basic results on P$v$MD's and related notions without star operations.
 \end{abstract}

\maketitle


\section{Introduction and Preliminaries} \label{prel}

Pr\"{u}fer $v$-multiplication domains, explicitly introduced in \cite[Griffin (1967)]{Gr} under the name of $v$-multiplication rings, have been
studied a great deal as a generalization   of Pr\"{u}fer domains and Krull domains.
 One of the { attractions} of  Pr\"{u}fer $v$-multiplication domains  is that they share many properties with Pr\"ufer domains and, furthermore,  they are stable in passing to polynomials, unlike Pr\"ufer domains (since a polynomial ring $D[X]$ is a Pr\"ufer domain only in the trivial case, i.e., when $D$ is a field).
 On the other hand, Pr\"{u}fer $v$-multiplication domains
are a special case of $v$-domains, a class of integrally closed domains which has recently attracted new
attention \cite[Anderson-Anderson-Fontana-Zafrullah (2008)]{AAFZ}, \cite[Halter-Koch (2009)]{H-K-09} and \cite[Fontana-Zafrullah (2009)]{FZ}.  The paper \cite[Dieudonn\'{e} (1941)]
{Di-41} provides a clue to where $v$-domains arose as a separate class of rings, though they were not called $v$-domains there.

The notions of $v$-domain and of several of its specializations  may { be obscured by the} jargon of Krull's star operations used in the ``official'' definitions and standard characterizations (the best source available for star operations and for this type of approach to $v$-domains is
Sections 32 and 34 of \cite[Gilmer (1972)]{Gi}). 
 The overhanging presence of  star operations 
seems to have limited the popularization of these distinguished classes of integral domains and, perhaps, has prevented the use of other powerful techniques,
such as those of homological algebra, in their study.

The aim of this note is to
provide ``star operation free'' definitions and characte\-ri\-za\-tions of  the above mentioned classes of
integral domains. In particular, we prove statements that, when used as definitions, do not
mention any star operations, leading to new characterizations of various special classes of $v$-domains.

\medskip


 Let $D$ be an integral domain with quotient field $K$. Let $
\boldsymbol{\overline{F}}(D)$ be the set of all nonzero
$D$--submodules of $K$ and let $\boldsymbol{F}(D)$ be the set of
all nonzero fractional ideals of $D$, i.e., $A \in
\boldsymbol{F}(D)$ if $A \in \boldsymbol{ \overline{F}}(D)$ and
there exists an element $0 \ne d \in D$ with $dA \subseteq D$. Let
$\boldsymbol{f}(D)$ be the set of all nonzero finitely generated
$D$--submodules of $K$. Then, obviously $\boldsymbol{f}(D)
\subseteq \boldsymbol{F}(D) \subseteq
\boldsymbol{\overline{F}}(D)$.

For $D$-submodules $A,B \in \boldsymbol{\overline{F}}(D)$, we use the notation $(A:B)$ to denote the set $
\{x\in K \mid xB\subseteq A\}$. If $(A:B) \neq (0)$, clearly, $(A:B) \in \boldsymbol{\overline{F}}(D)$ and if  $A \in
\boldsymbol{F}(D)$, then $(A:B) \in \boldsymbol{F}(D)$.  Denote $(D:A)$ by $A^{-1}$, which belongs to $\boldsymbol{F}(D))$
 whenever $A$ does, and $(D:A) =(0)$ if $A \in \boldsymbol{\overline{F}}(D) \setminus \boldsymbol{F}(D)$.  If $A\subseteq B$ then $A^{-1}\supseteq B^{-1}$.  Moreover, from the definition, it follows that $AA^{-1}\subseteq D$ and $D^{-1}=D$. Recall that, for $A\in \boldsymbol{\overline{F}}(D)$, $
A^{v}:=(A^{-1})^{-1}=(D:(D:A))$ and note that, if $A \in \boldsymbol{\overline{F}}(D) \setminus \boldsymbol{F}(D)$, then $A^{v} =K$, since $(D:A) = (0)$. Set $A^{t}:=\bigcup \{F^{v} \mid F\subseteq A \mbox{ and }  F\in \boldsymbol{f}(D)\}$. It can be easily  shown that $(A^{v})^{-1}=A^{-1} =(A^{-1} )^v$.
 If $A\in \boldsymbol{F}(D))$ is such that $A=A^{v}$ (resp., $A=A^{t})$ we say that $A$ is a \emph{fractional $v$-ideal} (resp., a \emph{fractional $t$-ideal}) \emph{of $D$}. 
 Note that,   if $A \in \boldsymbol{\overline{F}}(D) \setminus \boldsymbol{F}(D)$, then   $A=A^{v}$  if and only if $A=K$; 
  { on the other hand,  it is possible that $A =A^t\subsetneq K$ for $A \in \boldsymbol{\overline{F}}(D) \setminus \boldsymbol{F}(D)$  (for instance, if $D$ is a \texttt{fgv}-domain, i.e., an integral domain such that every nonzero finitely generated ideal is a $v$-ideal \cite[Zafrullah (1985)]{zafrullah}, then  $A =A^t$ for every $A \in \boldsymbol{\overline{F}}(D)$)}. 
 
 A fractional $v$-ideal is also called a \emph{fractional divisorial ideal}. If $ A \in \boldsymbol{F}(D)$,
 $A^{-1}$ is a fractional $v$-ideal, and every \emph{fractional invertible ideal}   (i.e., every fractional ideal $A$ such that $AA^{-1} =D$) is both a fractional $v$-ideal and a fractional $t$-ideal. 
 If there is a finitely generated fractional ideal $F$ such that 
$A^v=F^{v}$, we say that $A^v$ is a \emph{fractional $v$-ideal of finite type}. Note that, in this definition, we do not require that $F \subseteq A$; if  there is a finitely generated fractional ideal $F$ such that 
$A^v=F^{v}$ and $F \subseteq A$, we say that $A^v$ is a \emph{fractional $v$-ideal of strict finite type}. Examples of  $v$-ideals of finite type that are not $v$-ideals of strict finite type are given in \cite[Gabelli-Houston (1997), Section (4c)]{GH}.
 If $\ast$ provides here a general notation for the $v$- and 
$t$-operation,
then  call $A\in \boldsymbol{F}(D)$ \emph{$\ast $-invertible} if there is $B\in \boldsymbol{F}(D)$
such that $(AB)^{\ast }=D$. 
It can be shown that in this case $B^{\ast
}=A^{-1}$. It is obvious that an invertible ideal is  
$t$-invertible and a $t$-invertible ideal is also $v$-invertible.  So, $D$ is called  a \emph{$v$-domain} (resp., a \emph{Pr\"ufer $v$-multiplication domain} (for short, \emph{P$v$MD})) if every $F\in \boldsymbol{f}(D)$ is 
$v$-invertible (resp., $t$-invertible). Both these notions generalize the concept of Pr\"ufer domain, since a Pr\"ufer domain can be characterized by the fact that every $F\in \boldsymbol{f}(D)$ is invertible,   and, at the same time, the concept  of Krull domain  because, as we mention later, a domain D is a Krull domain if and only if every nonzero ideal of D is $t$-invertible.

  It can be shown
that $F\in \boldsymbol{f}(D)$ is $t$-invertible if and only if $F$ is $v$-invertible and $F^{-1}$ is a $v$-ideal of finite type { \cite[Zafrullah (2000), Theorem 1.1(c)]{Zaf-00}}. In particular, 
  from the previous considerations, we deduce:
$$
\mbox{ Pr\"ufer domain \; $\Rightarrow$ \; P$v$MD  \; $\Rightarrow$ \;  $v$-domain.}
$$
It is well known that the converse of  each of the previous implications does not hold in general. For instance, a Krull domain which is not Dedekind (e.g., the polynomial ring ${\mathbb Z}[X]$) shows the irreversibility of the first implication. An example of a $v$-domain which is not a P$v$MD was given in \cite[Dieudonn\'{e} (1941)]{Di-41}.


\section{Results} \label{res}

The following result maybe in the folklore. We have taken it from \cite[HelpDesk 0802]{Za-HD}, where the  second named author of the present paper made a limited attempt to define P$v$Md's without the $v$-operation.

\begin{lemma}\label{v-invertible} Given an integral domain $D$, a fractional ideal $A\in \boldsymbol{F}(D)$ is $v$-invertible if and only if $
(A^{-1}:A^{-1})=D$.
\end{lemma}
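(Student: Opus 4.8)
The plan is to prove the equivalence by unwinding the definition of $v$-invertibility and using the elementary identities recorded in the preliminaries, especially $(A^{-1})^v = A^{-1}$ and $(A^v)^{-1} = A^{-1}$. The key observation I would exploit is that $v$-invertibility of $A$ is equivalent to $(AA^{-1})^v = D$: indeed, if $A$ is $v$-invertible via some $B$ with $(AB)^v = D$, then by the remark in the excerpt one has $B^v = A^{-1}$, and since $(AB)^v$ depends only on $B^v$ (because $(AB)^v = (A B^v)^v$), we get $(AA^{-1})^v = D$; conversely $(AA^{-1})^v = D$ exhibits $A^{-1}$ as a witness. So the statement reduces to showing $(AA^{-1})^v = D$ if and only if $(A^{-1}:A^{-1}) = D$.

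For the forward direction, I would argue contrapositively or directly: assume $(AA^{-1})^v = D$ and take any $x \in (A^{-1}:A^{-1})$, i.e. $xA^{-1} \subseteq A^{-1}$. First I would note that the inclusion $D \subseteq (A^{-1}:A^{-1})$ is automatic, so only the reverse inclusion needs work. From $xA^{-1} \subseteq A^{-1}$ I would multiply through by $A$ and apply the $v$-operation, aiming to show $x \in D$. Concretely, $xAA^{-1} \subseteq AA^{-1}$, and taking $v$-closures gives $x(AA^{-1})^v \subseteq (AA^{-1})^v$, that is $xD \subseteq D$, whence $x \in D$. This direction is the easy one and is essentially formal.

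For the reverse direction, assume $(A^{-1}:A^{-1}) = D$ and aim to show $(AA^{-1})^v = D$. The natural handle is the general identity $\left((AA^{-1})^{-1}\right) = (A^{-1} : AA^{-1}) = ((A^{-1}:A^{-1}):A) $ type manipulations, so I would compute $(AA^{-1})^{-1} = (D : AA^{-1})$ and relate it to $(A^{-1}:A^{-1})$. The cleanest route is to show $(AA^{-1})^{-1} = D$, since then $(AA^{-1})^v = ((AA^{-1})^{-1})^{-1} = D^{-1} = D$. Now $(D:AA^{-1}) = (D:A^{-1}) \cap \dots$ is not quite right; instead I would use the adjunction $(D : AA^{-1}) = ((D:A^{-1}) : A) $, and since $(D:A^{-1}) = A^v \supseteq A$, combined with $A^{-1} A^v = A^{-1}$ (which follows from $(A^{-1}:A^{-1})=D$ and the fact that $A^{-1}A^v \subseteq A^{-1}$), conclude that $AA^{-1}$ has inverse $D$.

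The main obstacle I anticipate is bookkeeping the equivalence between $(A^{-1}:A^{-1}) = D$ and the statement $A^{-1} A^v = A^{-1}$, and making sure the reverse direction genuinely yields $(AA^{-1})^v = D$ rather than merely $(AA^{-1})^v \subseteq D$ (the latter being trivial). I would therefore be careful to establish that $(A^{-1}:A^{-1}) = (A^{-1} : A^{-1} A) = ((AA^{-1})^{-1} : (AA^{-1})^{-1})$-style rewritings hold, and to invoke $(A^{-1})^v = A^{-1}$ at the right moment so that passing to $v$-closures does not lose information. Once the identity $(A^{-1}:A^{-1}) = ((AA^{-1})^{-1}:(AA^{-1})^{-1})$ is in hand, the equivalence should fall out by applying the already-proved easy direction to the fractional ideal $AA^{-1}$ in place of $A$, but I expect the delicate point to be justifying each application of $(-)^{-1}$ and $(-)^v$ on modules that may a priori fail to be fractional ideals, which is exactly where the distinction between $\boldsymbol{F}(D)$ and $\boldsymbol{\overline{F}}(D)$ noted in the preliminaries must be watched.
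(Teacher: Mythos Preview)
Your direction ``$(AA^{-1})^v = D \Rightarrow (A^{-1}:A^{-1}) = D$'' is correct and is exactly what the paper does: from $xA^{-1}\subseteq A^{-1}$ multiply by $A$, take $v$, and get $x\in D$.

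The other direction has a genuine gap. You correctly aim to show $(AA^{-1})^{-1}=D$, and you correctly invoke the colon adjunction $(D:BC)=((D:B):C)$. But you choose the factorization $(D:AA^{-1})=((D:A^{-1}):A)=(A^v:A)$ and then try to push through using the claim $A^{-1}A^v\subseteq A^{-1}$. That claim is false in general: if $A$ is invertible and $A\not\subseteq D$, then $A^{-1}A^v=A^{-1}A=D\not\subseteq A^{-1}$. Equivalently, $A^{-1}A^v\subseteq A^{-1}$ would say $A^v\subseteq (A^{-1}:A^{-1})$, which under your hypothesis $(A^{-1}:A^{-1})=D$ forces $A^v\subseteq D$, certainly not true for arbitrary $A\in\boldsymbol{F}(D)$.

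The fix is to use the \emph{other} factorization of the same adjunction:
\[
(AA^{-1})^{-1}=(D:AA^{-1})=\bigl((D:A):A^{-1}\bigr)=(A^{-1}:A^{-1})=D,
\]
which is exactly the hypothesis. This is how the paper argues, phrased element-wise: $x\in (AA^{-1})^{-1}$ means $x(AA^{-1})\subseteq D$, hence $xA^{-1}\subseteq (D:A)=A^{-1}$, hence $x\in (A^{-1}:A^{-1})=D$. Once you see the identity $(AA^{-1})^{-1}=(A^{-1}:A^{-1})$, all the extra machinery you propose (the auxiliary identity for $((AA^{-1})^{-1}:(AA^{-1})^{-1})$, the worries about $\boldsymbol{F}(D)$ versus $\boldsymbol{\overline{F}}(D)$) becomes unnecessary.
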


\begin{proof} Suppose that $(A^{-1}:A^{-1})=D$. Let $x\in (AA^{-1})^{-1}\supseteq D$.
Then, $x(AA^{-1})\subseteq D$ or $xA^{-1}\subseteq A^{-1}$ or $x\in
(A^{-1}:A^{-1}) =D$. So, $(AA^{-1})^{-1}\subseteq D$ and we have $(AA^{-1})^{-1}=D.$ This gives $(AA^{-1})^{v}=D$.

Conversely, if $A$ is $v$-invertible, then $(AA^{-1})^{-1}=D$. Let $x\in
(A^{-1}:A^{-1})\supseteq D$. Then, $xA^{-1}\subseteq A^{-1}$. Multiplying both
sides by $A$ and applying the $v$-operation, we get $x\in D$. So, $D\subseteq
(A^{-1}:A^{-1})\subseteq D$ and the equality follows.
\end{proof}

\begin{theorem}\label{v-domain} The following are equivalent for an integral domain $D$.

\begin{enumerate}

\item [(i)] $D$ is a $v$-domain.

\item [(ii)]  $(F^{-1}:F^{-1})=D$ for each $F\in \boldsymbol{f}(D)$.

\item [(iii)]  $(F^{v}:F^{v})=D$ for each $F\in \boldsymbol{f}(D)$.

\item [(iv)]  $((a,b)^{-1}:(a,b)^{-1})=D$ for each two generated fractional ideal $(a,b)\in \boldsymbol{f}(D)$.

\item [(v)]  $((a)\cap (b)):((a)\cap (b))=D$ for all $a,b\in D\backslash \{0\}$.
\end{enumerate}
\end{theorem}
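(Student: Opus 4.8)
The plan is to route everything through Lemma \ref{v-invertible} together with one multiplicative ideal identity, so that the only substantial work lies in the reduction from $n$ generators to two. The equivalence (i)$\Leftrightarrow$(ii) is immediate: by definition $D$ is a $v$-domain exactly when every $F\in\boldsymbol{f}(D)$ is $v$-invertible, and Lemma \ref{v-invertible} rephrases $v$-invertibility of $F$ as $(F^{-1}:F^{-1})=D$. For (ii)$\Leftrightarrow$(iii) I would prove the sharper termwise identity $(A^{-1}:A^{-1})=(A^{v}:A^{v})$, valid for every $A\in\boldsymbol{F}(D)$. This rests on the elementary observation that $(A:A)\subseteq(A^{-1}:A^{-1})$ for all $A$: if $xA\subseteq A$ and $y\in A^{-1}$, then $xyA=y(xA)\subseteq yA\subseteq D$, so $xy\in A^{-1}$. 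Applying this with $A$ replaced by $A^{-1}$ gives $(A^{-1}:A^{-1})\subseteq(A^{v}:A^{v})$, and applying it with $A$ replaced by $A^{v}$, together with $(A^{v})^{-1}=A^{-1}$, gives the reverse inclusion. Taking $A=F$ yields (ii)$\Leftrightarrow$(iii).

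The equivalence (iv)$\Leftrightarrow$(v) I would settle by the computation $(a,b)^{-1}=(D:a)\cap(D:b)=\tfrac1a D\cap\tfrac1b D=\tfrac1{ab}\big((a)\cap(b)\big)$, combined with the scaling invariance $(cM:cM)=(M:M)$ for $0\neq c\in K$. Together these give $((a,b)^{-1}:(a,b)^{-1})=((a)\cap(b):(a)\cap(b))$ termwise, so (iv) and (v) say the same thing (after clearing denominators to reduce a two-generated fractional ideal to the case $a,b\in D$, the case of a principal ideal being trivial). Since (iv) is just (ii) restricted to two-generated ideals, the implication (ii)$\Rightarrow$(iv) is trivial, and the whole theorem reduces to the single nontrivial implication (iv)$\Rightarrow$(ii).

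For (iv)$\Rightarrow$(ii) the engine is two formal facts about $v$-invertibility. First, a product of $v$-invertible fractional ideals is $v$-invertible: if $(PP^{-1})^{v}=(QQ^{-1})^{v}=D$, then, using the standard star-operation identity $(XY)^{v}=(X^{v}Y^{v})^{v}$, one gets $((PQ)(P^{-1}Q^{-1}))^{v}=((PP^{-1})^{v}(QQ^{-1})^{v})^{v}=D$. Second, a factor of a $v$-invertible ideal is $v$-invertible: if $((PQ)C)^{v}=D$ then $(P(QC))^{v}=D$. With these in hand I would invoke the ideal identity
\[
\prod_{1\le i<j\le n}(a_i,a_j)=\prod_{k=1}^{n-1}I_k,\qquad I_k:=\big(a_{i_1}\cdots a_{i_k}\mid i_1<\cdots<i_k\big),
\]
which exhibits $I_1=(a_1,\dots,a_n)$ as a factor of a product of two-generated ideals. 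Granting (iv), every pair $(a_i,a_j)$ is $v$-invertible, hence so is the left-hand product, hence so is each factor on the right, in particular $(a_1,\dots,a_n)$; by Lemma \ref{v-invertible} this is precisely (ii).

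I expect the displayed identity to be the main obstacle. I would prove it by reducing to a monomial computation in the polynomial ring $\mathbb{Z}[a_1,\dots,a_n]$: the substitution homomorphism sending the indeterminates to the given ring elements carries a generating set of either side to a generating set of the corresponding ideal of $D$ and preserves inclusions, so it suffices to establish the identity for indeterminates. There both sides are monomial ideals generated in the single degree $\binom n2$, so equality amounts to showing that two families of exponent vectors coincide: the in-degree sequences of orientations of the complete graph $K_n$ on the left, and the row-sum sequences of $0$–$1$ matrices with column sums $1,2,\dots,n-1$ on the right. Both families are cut out by the same system of dominance inequalities — Landau's score-sequence condition matches the Gale–Ryser condition inequality by inequality here, since both bounds equal $mn-\binom{m+1}{2}$ — and verifying this matching carefully is where the real work lies. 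For the application one needs only the weaker assertion that $(a_1,\dots,a_n)=I_1$ divides the left-hand product.
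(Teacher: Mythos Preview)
Your argument is correct, and it differs from the paper's in two places. For (ii)$\Leftrightarrow$(iii) the paper argues (i)$\Rightarrow$(iii) and (iii)$\Rightarrow$(i) directly, multiplying by $F^{-1}$ and applying the $v$-operation; your termwise identity $(A^{-1}:A^{-1})=(A^{v}:A^{v})$ is cleaner and more general, and costs nothing extra. The real divergence is in closing the loop from two generators back to $n$: the paper simply \emph{cites} the known fact that $D$ is a $v$-domain if and only if every two-generated nonzero ideal is $v$-invertible \cite[Lemma 2.6]{MNZ}, and then checks directly that (v) forces each $(a,b)$ to be $v$-invertible. You instead reprove that reduction from scratch via the product identity $\prod_{i<j}(a_i,a_j)=\prod_{k=1}^{n-1} I_k$ and the Landau/Gale--Ryser matching. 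This is valid and fully self-contained, but it is considerably heavier machinery than the situation requires: for the application you only need that $(a_1,\dots,a_n)$ divides $\prod_{i<j}(a_i,a_j)$, not the full equality, and in any case the literature already supplies the two-generator reduction. The payoff of your route is that the theorem becomes independent of the cited lemma; the cost is a combinatorial digression that dwarfs the rest of the proof.
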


\begin{proof} (i)$\Leftrightarrow$(ii) follows from Lemma \ref{v-invertible} and from the definition of { a} $v$-domain.

(i)$\Rightarrow$(iii). Let $F\in \boldsymbol{f}(D)$ and  $x\in (F^{v}:F^{v})\supseteq D$. Then, $xF^{v}\subseteq F^{v}$.
 Multiplying both sides by $F^{-1}$ and applying the $v$-operation, we get $x(F^{v}F^{-1})^{v}\subseteq (F^{v}F^{-1})^{v}$. But, by
(i), $(F^{v}F^{-1})^{v}=(FF^{-1})^{v}=D$ and so $x\in D$. This forces $D\subseteq
(F^{v}:F^{v})\subseteq D$.

(iii)$\Rightarrow $(i). Let $F\in \boldsymbol{f}(D)$ and $x\in (F^{v}F^{-1})^{-1}\supseteq D$. Then, $x(F^{v}F^{-1})\subseteq D$. But then $xF^{v}\subseteq F^{v}$, which gives $x\in (F^{v}:F^{v})=D$. Therefore $D\subseteq (F^{v}F^{-1})^{-1}\subseteq D$,
which means that every $F\in \boldsymbol{f}(D)$ is $v$-invertible.

(ii)$\Rightarrow$(iv) is obvious.

(iv)$\Rightarrow$(v). Let $a,b\in D$ be two nonzero elements and, by (iv), let $((a,b)^{-1}:(a,b)^{-1})=D$.  Since $(a,b)^{-1} = (D:(a,b)) = (D:(a)) \cap (D:(b)) = (a^{-1}) \cap (b^{-1}) = a^{-1}b^{-1}((a) \cap (b))$, then from the assumption  we have
 $(a^{-1}b^{-1}((a)\cap (b)):a^{-1}b^{-1}((a)\cap (b)))= D$ which is the same as $
((a)\cap (b)):((a)\cap (b))=D$, for all $a,b\in D\backslash \{0\}$.

(v)$\Rightarrow$(i). Recall that $D$ is a $v$-domain if and only if every
two generated nonzero ideal of $D$ is $v$-invertible \cite[Mott-Nashier-Zafrullah (1990), Lemma 2.6]{MNZ}.  (Note that H. Pr\"ufer proved that every $F\in \boldsymbol{f}(D)$ is invertible if and only if every
two generated nonzero ideal of $D$ is invertible \cite[Pr\"ufer (1932),   page 7]{Prufer}; a similar result, for the $t$-invertibility case, was proved in \cite[Mott-Nashier-Zafrullah (1990), Lemma 1.7]{MNZ}.)
Now, let $a,b\in D\backslash \{0\}$ and $
x\in ((a,b)(a,b)^{-1})^{-1} \supseteq D$. Then $x(a,b)(a,b)^{-1}\subseteq D$, or $x(a,b)^{-1}\subseteq (a,b)^{-1}$, or 
$xa^{-1}b^{-1}{((a)\cap (b))}\subseteq a^{-1}b^{-1}{((a)\cap (b))}$.  This is equivalent to $x((a)\cap
(b))\subseteq (a)\cap (b)$ or $x\in (((a)\cap (b)):((a)\cap (b)))=D$. This
forces $D\subseteq ((a,b)(a,b)^{-1})^{-1}\subseteq D$.
\end{proof}

Call an integral domain $D$ a \emph{$v$-finite conductor} (for short, a \emph{$v$-{\texttt{FC}}-}) \emph{domain} if $
(a)\cap (b)$ is a $v$-ideal of finite type, for every pair $a,b\in
D\backslash \{0\}$ { \cite[Dumitrescu-Zafrullah (2008), Section 2]{DZ}}.

The above definition of $v$-{\texttt{FC}}-domain makes use of the $v$-operation. We have a somewhat contrived solution for
this, in the form of the following characterization of $v$-{\texttt{FC}}-domains.

\begin{proposition} \label{v-FC-domain}
An integral domain $D$ with quotient field $K$ is a $v$-{\texttt{FC}}-domain if and only if for
each pair $a,b$ in $D\backslash \{0\}$ there exist $y_{1},y_{2},...,y_{n}\in
K\backslash \{0\}$, with $n \geq 1$, such that $(a,b)^{v}=\bigcap \{ y_{i}D \mid 1 \leq i \leq n\}$. Consequently, $D$
is a $v$-{\texttt{FC}}-domain if and only if for each pair $a,b$ in $D\backslash \{0\}$
there exist $z_{1},z_{2},...,z_{m}\in K\backslash \{0\}$, with $m\geq 1$,  such that $((a)\cap
(b))^{-1}=\bigcap \{z_{j}D \mid  1 \leq j \leq m\}$.
\end{proposition}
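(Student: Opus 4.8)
The plan is to reduce both equivalences to one elementary observation about inverses of finitely generated ideals, and then to track a single scalar factor $ab$. The observation is: for $F=(c_1,\dots,c_n)\in\boldsymbol{f}(D)$ one has $F^{-1}=(D:F)=\bigcap_{i=1}^n(D:(c_i))=\bigcap_{i=1}^n c_i^{-1}D$, a finite intersection of principal fractional ideals; conversely, any finite intersection $\bigcap_{i=1}^n y_iD$ of principal fractional ideals equals $G^{-1}$ for $G:=(y_1^{-1},\dots,y_n^{-1})\in\boldsymbol{f}(D)$, since $G^{-1}=\bigcap_i(y_i^{-1})^{-1}D=\bigcap_i y_iD$. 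Combining these with $(A^{v})^{-1}=A^{-1}$ and $(A^{-1})^{-1}=A^{v}$ shows that a \emph{divisorial} ideal $J$ is a $v$-ideal of finite type if and only if $J^{-1}$ is a finite intersection of principal fractional ideals. Before invoking this I would record the scalar identities already produced in the proof of Theorem~\ref{v-domain}: from $(a,b)^{-1}=a^{-1}b^{-1}((a)\cap(b))$ we get $(a)\cap(b)=ab\,(a,b)^{-1}$ and, taking inverses, $((a)\cap(b))^{-1}=(ab)^{-1}(a,b)^{v}$. I would also note that $(a)\cap(b)$ is divisorial: it is the scalar multiple $ab\,(a,b)^{-1}$ of the inverse $(a,b)^{-1}$, and $((a,b)^{-1})^{v}=(a,b)^{-1}$ by the identity $(A^{-1})^{v}=A^{-1}$, while $(cA)^{v}=cA^{v}$ for $c\in K\setminus\{0\}$. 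Hence the defining condition of a $v$-\texttt{FC}-domain is precisely that $(a)\cap(b)=F^{v}$ for some $F\in\boldsymbol{f}(D)$.

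For the main equivalence I would argue both directions directly. If $D$ is a $v$-\texttt{FC}-domain, write $(a)\cap(b)=F^{v}$ with $F=(c_1,\dots,c_n)$; then $((a)\cap(b))^{-1}=(F^{v})^{-1}=F^{-1}=\bigcap_i c_i^{-1}D$, and multiplying by $ab$ yields $(a,b)^{v}=ab\,((a)\cap(b))^{-1}=\bigcap_i(ab\,c_i^{-1})D$, which is of the required form with $y_i=ab\,c_i^{-1}\in K\setminus\{0\}$. Conversely, suppose $(a,b)^{v}=\bigcap_i y_iD$. Then $((a)\cap(b))^{-1}=(ab)^{-1}(a,b)^{v}=\bigcap_i(ab)^{-1}y_iD$ is a finite intersection of principal fractional ideals, so by the converse half of the observation it equals $G^{-1}$ for some $G\in\boldsymbol{f}(D)$. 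Applying $(\cdot)^{-1}$ and using that $(a)\cap(b)$ is divisorial gives $(a)\cap(b)=((a)\cap(b))^{v}=(G^{-1})^{-1}=G^{v}$, so $(a)\cap(b)$ is a $v$-ideal of finite type and $D$ is a $v$-\texttt{FC}-domain.

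The ``consequently'' clause is then immediate from the scalar identity $((a)\cap(b))^{-1}=(ab)^{-1}(a,b)^{v}$: a representation $(a,b)^{v}=\bigcap_i y_iD$ transforms into $((a)\cap(b))^{-1}=\bigcap_i z_iD$ with $z_i=(ab)^{-1}y_i$, and conversely $y_i=ab\,z_i$, so the two existence statements are equivalent and the second characterization follows from the first. The only genuine content is the observation of the first paragraph identifying ``$v$-ideal of finite type'' with ``inverse is a finite intersection of principal fractional ideals''; that is the step I would state and verify most carefully. Everything after it is bookkeeping, for which the facts I would double-check are that multiplying a finite intersection of principal fractional ideals by a nonzero $c\in K$ again gives such an intersection (clear, since $c\bigcap_i y_iD=\bigcap_i(cy_i)D$) and that the chain $J\mapsto J^{-1}\mapsto(J^{-1})^{-1}=J^{v}=J$ is legitimate for the divisorial ideal $J=(a)\cap(b)$.
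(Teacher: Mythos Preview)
Your proof is correct and follows essentially the same route as the paper. The paper carries out the computations inline and cites \cite[Houston-Zafrullah (1988), Lemma 1.1]{HZ} for the step $(\bigcap_i y_iD)^{-1}=(y_1^{-1},\dots,y_n^{-1})^v$, whereas you abstract this into a clean preliminary observation (``divisorial $J$ is a $v$-ideal of finite type iff $J^{-1}$ is a finite intersection of principal fractional ideals'') and explicitly verify that $(a)\cap(b)$ is divisorial; otherwise the scalar bookkeeping via $(a,b)^{-1}=a^{-1}b^{-1}((a)\cap(b))$ is identical.
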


\begin{proof}
 Let $D$ be a $v$-{\texttt{FC}}-domain and let $a,b\in D\backslash \{0\}.$ Then,
there are $a_{1}, a_{2}, ..., a_{n}\in D$ such that $(a)\cap
(b)=(a_{1}, a_{2}, ...,a_{n})^{v}$. Dividing both sides by $ab$,  we get $ (a,b)^{-1} =a^{-1}b^{-1} {(
(a)\cap (b))}=(a_{1}/ab, a_{2}/ab, ..., a_{n}/ab)^{v}$. This gives 
$$
(a,b)^{v}=((a_{1}/ab, a_{2}/ab, ..., a_{n}/ab)^{v})^{-1}=
\bigcap \left\{ \frac{ab}{a_{i}}D \mid 1\leq i \leq n \right\}.$$

 Conversely,  if for
each pair $a,b$ in $D\backslash \{0\}$ there exist $y_{1},y_{2},...,y_{n}\in
K\backslash \{0\}$ such that $(a,b)^{v}=\bigcap \{y_{i}D \mid 1\leq i \leq n \}$, then 
$$a^{-1}b^{-1}{((a)\cap (b))} = (a,b)^{-1}=((a,b)^{v})^{-1}= \left(\bigcap\{y_{i}D \mid 1\leq i \leq n\}\right)^{-1}.$$

On the other hand,  $(\bigcap\{y_{i}D \mid 1\leq i \leq n\})^{-1} = (y_1^{-1}D, y_2^{-1}D, ..., y_n^{-1}D)^v$ \cite[Houston-Zafrullah (1988), Lemma 1.1]{HZ}, and 
 this gives 
$(a)\cap
(b) = \left(\frac{ab}{y_{1}}D,  \frac{ab}{y_{2}}D, ...,  \frac{ab}{y_{n}}D\right)^{v}$. 

 For the \textquotedblleft
consequently\textquotedblright\ part,  note that $((a)\cap (b))^{-1}=a^{-1}b^{-1}(a,b)^{v}.$
\end{proof}

An immediate consequence of the above results is the following
characterization of P$v$MD's, in which statements (iii) and (iv) are ``$v$-operation free''.  
\begin{corollary} \label{PvMD}  The following are equivalent for an integral domain $D$.

\begin{enumerate}
\item [(i)] $D$ is a P$v$MD.
\item [(ii)]  $D$ is a $v$-domain and a $v$-{\texttt{FC}}-domain,

\item [(iii)] for all $a,b\in D\backslash (0)$,  $((a)\cap (b))^{-1}$ is a finite
intersection of principal fractional ideals and $((a)\cap (b)):((a)\cap
(b))=D$.

\item [(iv)] for all $a,b\in D\backslash (0)$, $((a)\cap (b))^{-1}$ is  a finite
intersection of principal fractional ideals and $((a,b)^{-1}:(a,b)^{-1})=D$.


\end{enumerate}
\end{corollary}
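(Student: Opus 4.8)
The plan is to treat the equivalence (i)$\Leftrightarrow$(ii) as the one substantive step and to read off (ii)$\Leftrightarrow$(iii) and (iii)$\Leftrightarrow$(iv) as direct translations of the two conditions defining (ii) through Theorem~\ref{v-domain} and Proposition~\ref{v-FC-domain}. The organizing identity throughout is $(a,b)^{-1} = a^{-1}b^{-1}((a)\cap (b))$, equivalently $(a)\cap (b) = ab\,(a,b)^{-1}$, which was already used in the proof of Theorem~\ref{v-domain}; it lets me pass freely between data about $(a,b)^{-1}$ and data about $(a)\cap (b)$, since the two differ only by multiplication by the invertible element $ab$.

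For (i)$\Leftrightarrow$(ii) I would start from the criterion quoted in the Introduction (\cite[Theorem 1.1(c)]{Zaf-00}): $F\in \boldsymbol{f}(D)$ is $t$-invertible if and only if $F$ is $v$-invertible and $F^{-1}$ is a $v$-ideal of finite type. For the forward direction, if $D$ is a P$v$MD then every $F$ is $t$-invertible, hence $v$-invertible, so $D$ is a $v$-domain; moreover, applying the criterion to a two-generated ideal $(a,b)$ shows $(a,b)^{-1}$ is a $v$-ideal of finite type, and then $(a)\cap (b)=ab\,(a,b)^{-1}$ is a $v$-ideal of finite type as well, so $D$ is a $v$-\texttt{FC}-domain. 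For the converse, given a $v$-domain that is also a $v$-\texttt{FC}-domain, fix a two-generated $(a,b)$: being a $v$-domain gives $v$-invertibility of $(a,b)$, while the $v$-\texttt{FC} hypothesis makes $(a)\cap (b)$, hence $(a,b)^{-1}$, a $v$-ideal of finite type; by the criterion $(a,b)$ is $t$-invertible. To upgrade from two-generated ideals to all of $\boldsymbol{f}(D)$ I would invoke \cite[Lemma 1.7]{MNZ}, which reduces $t$-invertibility of every finitely generated ideal to that of every two-generated ideal, yielding that $D$ is a P$v$MD.

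The remaining equivalences are then formal. For (ii)$\Leftrightarrow$(iii): by Theorem~\ref{v-domain}, (i)$\Leftrightarrow$(v), being a $v$-domain is exactly the condition $((a)\cap (b)):((a)\cap (b))=D$ for all $a,b\in D\setminus\{0\}$; by the ``consequently'' part of Proposition~\ref{v-FC-domain}, being a $v$-\texttt{FC}-domain is exactly the condition that each $((a)\cap (b))^{-1}$ is a finite intersection of principal fractional ideals. Conjoining these two statements is precisely (iii). For (iii)$\Leftrightarrow$(iv), both conditions share the first clause about $((a)\cap (b))^{-1}$, so I only need that the two conductor conditions $((a)\cap (b)):((a)\cap (b))=D$ and $((a,b)^{-1}:(a,b)^{-1})=D$ coincide; this is exactly the content of the equivalence (iv)$\Leftrightarrow$(v) inside Theorem~\ref{v-domain}, established there via the same identity $(a,b)^{-1}=a^{-1}b^{-1}((a)\cap (b))$.

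I expect the main obstacle to be the bookkeeping around the phrase ``$v$-ideal of finite type,'' specifically verifying that multiplication by the invertible factor $ab$ (respectively $a^{-1}b^{-1}$) preserves it, so that the finite-type property transfers cleanly between $(a,b)^{-1}$ and $(a)\cap (b)$, and making sure the reduction to two generators via \cite[Lemma 1.7]{MNZ} is genuinely applicable to the $t$-operation rather than only to invertibility. Once these two points are pinned down, every other step is a citation of Theorem~\ref{v-domain}, Proposition~\ref{v-FC-domain}, or the $t$-invertibility criterion, and no genuinely new computation is required.
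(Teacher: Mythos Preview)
Your proposal is correct and follows essentially the same route as the paper: the $t$-invertibility criterion from \cite{Zaf-00} together with the two-generator reduction handles (i)$\Leftrightarrow$(ii), the identity $(a,b)^{-1}=a^{-1}b^{-1}((a)\cap(b))$ transfers the finite-type condition, and Theorem~\ref{v-domain} with Proposition~\ref{v-FC-domain} dispatches the remaining equivalences. The only cosmetic difference is that the paper argues (ii)$\Leftrightarrow$(iii) and (ii)$\Leftrightarrow$(iv) in parallel, whereas you chain (ii)$\Leftrightarrow$(iii)$\Leftrightarrow$(iv); the content is identical.
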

\begin{proof} (i)$\Leftrightarrow$(ii)
 stems from the fact that $D$ is a P$v$MD (resp., a $v$-domain) if
and only if every two generated nonzero ideal of $D$ is $t$-invertible (resp., $v$-invertible) \cite[Malik-Mott-Zafrullah (1988), Lemma 1.7]{MMZ}  (resp.,   \cite[Mott-Nashier-Zafrullah (1990), Lemma 2.6]{MNZ}). Moreover,  every two generated ideal  of $D$ is 
$t$-invertible if and only if 
every two generated ideal $(a,b)$ of $D$ is 
$v$-invertible and such that $(a, b)^{-1}= (x_{1},x_{2},...,x_{r})^{v}$ where $r\geq 1$   and $ x_{1}, x_{2},..., x_{r} \in K$ { \cite[Zafrullah (2000), Theorem 1.1(c)]{Zaf-00}}. 
Finally, since  $(a,b)^{-1}= a^{-1}b^{-1}{((a)\cap (b))}$,  $(a, b)^{-1}$ is a  fractional $v$-ideal of finite type 
 if and only if  $(a)\cap (b)$ is a $v$-ideal of finite type.

(ii)$\Leftrightarrow$(iii) and (ii)$\Leftrightarrow$(iv) are straightforward consequences of 
Theorem \ref{v-domain} and Proposition \ref{v-FC-domain}.
\end{proof}

Recall that an integral domain $D$ is called a \emph{finite conductor} (for short, \emph{{\texttt{FC}}-}) \emph{domain} if $((a)\cap (b))$
is finitely generated for each pair $a, b\in D$.  Just to show how far we have
traveled since 1978, when this notion was introduced, we state and provide an easy proof the following statement, which appeared as the main result (Theorem 2) in  \cite[Zafrullah (1978)]{Za-78}.

\begin{corollary} \label{FC-PvMD}  An integrally closed {\texttt{FC}}-domain is a P$v$MD.
\end{corollary}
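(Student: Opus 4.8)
The plan is to deduce the statement from Corollary \ref{PvMD}, which characterizes a P$v$MD as a domain that is simultaneously a $v$-domain and a $v$-\texttt{FC}-domain. So it suffices to verify these two properties separately, starting from the hypotheses that $D$ is integrally closed and that $(a)\cap(b)$ is finitely generated for every pair $a,b\in D\setminus\{0\}$.

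The $v$-\texttt{FC} condition is essentially immediate. Each principal fractional ideal $aD$ is invertible, hence divisorial, i.e. a $v$-ideal, and an intersection of $v$-ideals is again a $v$-ideal; therefore $(a)\cap(b)=(aD)\cap(bD)$ is a $v$-ideal for all nonzero $a,b$. Since $D$ is an \texttt{FC}-domain, $(a)\cap(b)$ is finitely generated, so taking $F=(a)\cap(b)$ itself witnesses that $(a)\cap(b)$ is a $v$-ideal of finite type. Thus $D$ is a $v$-\texttt{FC}-domain.

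For the $v$-domain property I would invoke the ``star-operation free'' criterion (v) of Theorem \ref{v-domain}, namely that $((a)\cap(b)):((a)\cap(b))=D$ for all nonzero $a,b$. Fix such $a,b$ and set $I:=(a)\cap(b)$, a nonzero finitely generated fractional ideal by the \texttt{FC} hypothesis. The set $(I:I)$ is a subring of $K$ containing $D$, and the crucial point is that every $x\in(I:I)$ is integral over $D$. This is the familiar determinant-trick (Cayley--Hamilton) argument: since $xI\subseteq I$ and $I$ is a nonzero, hence faithful, finitely generated $D$-module, expressing the action of $x$ on a finite generating set as a matrix over $D$ yields a monic polynomial over $D$ annihilating $x$. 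Hence $(I:I)$ is integral over $D$, and because $D$ is integrally closed in $K$ we conclude $(I:I)\subseteq D$, so $(I:I)=D$. By Theorem \ref{v-domain} this makes $D$ a $v$-domain, and combined with the previous paragraph, Corollary \ref{PvMD} shows that $D$ is a P$v$MD.

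The only genuinely substantive step is the integrality of the multiplier ring $(I:I)$ over $D$; this is precisely where the finite generation of $I$ (the \texttt{FC} hypothesis) and integral closedness combine. The point to handle with care is ensuring that $I=(a)\cap(b)$ is finitely generated, so that the determinant trick applies and $(I:I)$ is forced into $D$.
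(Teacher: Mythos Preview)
Your proof is correct and follows essentially the same route as the paper's: both verify condition (v) of Theorem~\ref{v-domain} by using that $(I:I)=D$ for a nonzero finitely generated $I$ in an integrally closed domain (the paper cites \cite[Gilmer (1972), Theorem 34.7]{Gi} for this, while you spell out the determinant trick), and then conclude via Corollary~\ref{PvMD}. You are in fact more explicit than the paper in checking the $v$-\texttt{FC} condition, which the paper leaves implicit.
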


\begin{proof} First note that, since $D$ is integrally closed $(F:F)=D$ for every
finitely generated ideal $F$ of $D$ \cite[Gilmer (1972), Theorem 34.7]{Gi}. So, for each pair $a,b\in D\backslash
\{0\},$ since $D$ is a ${\texttt{FC}}$-domain, $((a)\cap (b)):((a)\cap (b))=D$. But this makes $
D $ a $v$-domain  by Theorem \ref{v-domain} and, so, a P$v$MD by Corollary \ref{PvMD}.
\end{proof}


Lemma \ref{v-invertible} can also be instrumental in characterizing completely integrally
closed (for short, CIC-) domains (see, for instance,
\cite[Gilmer (1972), Theorem 34.3]{Gi}). Also the previous approach leads to a
characterization of Krull domains in a manner similar to the
characterization of $v$-domains leading to the characterization of P$v$MD's.

\begin{proposition} \label{CIC-domain} The following are equivalent for an integral domain $D$.
\begin{enumerate}
\item[(i)] $D$ is a CIC-domain.

\item[(ii)] $(A^{-1}:A^{-1})=D$ for all $A\in\boldsymbol{F}(D)$.
\end{enumerate}
In particular, a CIC-domain is a $v$-domain.
\end{proposition}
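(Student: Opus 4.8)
The plan is to route both conditions through the notion of $v$-invertibility, for which Lemma~\ref{v-invertible} already supplies exactly the translation we need. On one side, Lemma~\ref{v-invertible} tells us that, for a fixed $A\in\boldsymbol{F}(D)$, the equality $(A^{-1}:A^{-1})=D$ is \emph{equivalent} to $A$ being $v$-invertible; so condition (ii), read as a family of statements indexed by $A\in\boldsymbol{F}(D)$, is nothing but the assertion that every nonzero fractional ideal of $D$ is $v$-invertible. On the other side, I would invoke the classical characterization of complete integral closure: $D$ is a CIC-domain if and only if $(AA^{-1})^{v}=D$ for every $A\in\boldsymbol{F}(D)$, i.e.\ if and only if every nonzero fractional ideal is $v$-invertible \cite[Gilmer (1972), Theorem 34.3]{Gi}. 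Matching these two descriptions term by term yields (i)$\Leftrightarrow$(ii) at once.

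For the final assertion I would simply specialize the quantifier in (ii). Once (i)$\Leftrightarrow$(ii) is in hand, a CIC-domain satisfies $(A^{-1}:A^{-1})=D$ for \emph{all} $A\in\boldsymbol{F}(D)$, hence in particular for every finitely generated $F\in\boldsymbol{f}(D)\subseteq\boldsymbol{F}(D)$. That is precisely condition (ii) of Theorem~\ref{v-domain}, so $D$ is a $v$-domain.

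The only genuine content is imported from the classical CIC characterization, so the point to weigh is whether one is content to cite \cite[Gilmer (1972)]{Gi} or wants a self-contained argument. For the latter I would prove directly that $D$ is a CIC-domain if and only if $(A:A)=D$ for every $A\in\boldsymbol{F}(D)$: for the forward direction, if $x\in(A:A)$, picking $0\neq a\in A$ and $0\neq d\in D$ with $dA\subseteq D$ gives $(da)x^{n}\in D$ for all $n\geq 0$, so $x$ is almost integral over $D$ and hence lies in $D$; for the converse, an almost integral $x$ makes $A:=D[x]$ a fractional ideal with $x\in(A:A)=D$. The bridge to (ii) is then the always-valid inclusion $(A:A)\subseteq(A^{-1}:A^{-1})$ (if $xA\subseteq A$ and $yA\subseteq D$ then $xyA\subseteq yA\subseteq D$, whence $xA^{-1}\subseteq A^{-1}$), together with the fact that $A^{-1}$ itself ranges over $\boldsymbol{F}(D)$.

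I expect this bridging step to be the subtle one, and it is worth flagging. Condition (ii) literally tests only the divisorial ideals $A^{-1}$, so to deduce CIC from (ii) one uses $(A:A)\subseteq(A^{-1}:A^{-1})=D$ for every $A$, forcing $(A:A)=D$, while to deduce (ii) from CIC one applies $(B:B)=D$ to $B=A^{-1}$. Reconciling these two quantifier ranges is the place where the self-contained argument must be stated with care; routing everything through Lemma~\ref{v-invertible} and \cite[Gilmer (1972), Theorem 34.3]{Gi} sidesteps the issue entirely, which is why I would present that route first.
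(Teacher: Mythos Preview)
Your primary argument is correct and is essentially the paper's own proof: cite Gilmer for ``CIC $\Leftrightarrow$ every $A\in\boldsymbol{F}(D)$ is $v$-invertible'', apply Lemma~\ref{v-invertible} to each $A$, and deduce the $v$-domain statement by restricting to $\boldsymbol{f}(D)$ and invoking Theorem~\ref{v-domain}(ii). The self-contained alternative via $(A:A)=D$ is additional commentary not in the paper, but it is sound.
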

\begin{proof} Note that $D$ is CIC if and only if every $A\in \boldsymbol{F}(D)$ is $v$-invertible \cite[Gilmer (1972), Proposition 34.2 and Theorem 34.3]{Gi}. Now, the equivalence (i)$\Leftrightarrow$(ii) is an immediate consequence of Lemma \ref{v-invertible}.
The last statement is a straightforward consequence of the equivalence (i)$\Leftrightarrow$(ii) of Theorem \ref{v-domain}.
\end{proof}


\begin{remark} \rm  We have been informed by the referee that he/she has used Proposition \ref{CIC-domain} while teaching a course on multiplicative ideal theory. So, like Lemma \ref{v-invertible}, this is another folklore result in need of a standard reference.\end{remark}

\begin{theorem} \label{Krull}
The following are equivalent for an integral domain $D$.
\begin{enumerate}
\item[(i)] $D$ is a Krull domain.

\item[(ii)] $D$ is a Mori  $v$-domain.

\item[(iii)]  For each $A\in \boldsymbol{F}(D)$, there exist $y_{1},y_{2}, ...,y_{n}\in A$ such that $A^{-1}=\bigcap \{{y_{i}}^{-1}D \mid 1\leq i \leq n \}$ and, for all $a,b\in D\backslash \{0\}$, $
((a)\cap (b)):((a)\cap (b))=D$.

\item[(iv)]  For each $A\in \boldsymbol{F}(D)$, there exist $x,y\in A$ such that $A^{-1}=x^{-1}
D\cap {y}^{-1}D$ and for all $a,b\in D\backslash \{0\},$ $((a)\cap
(b)):((a)\cap (b))=D$.
\end{enumerate}
\end{theorem}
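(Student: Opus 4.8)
The plan is to prove the equivalences through the web (i) $\Leftrightarrow$ (ii) $\Leftrightarrow$ (iii) together with (i) $\Rightarrow$ (iv) $\Rightarrow$ (iii), relying on two facts recalled earlier: that $D$ is a Krull domain precisely when every nonzero ideal of $D$ is $t$-invertible, and that in a Mori domain (one satisfying the ascending chain condition on integral divisorial ideals) the $t$- and $v$-operations coincide, since there every divisorial ideal is of strict finite type.

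For (i) $\Leftrightarrow$ (ii): a Krull domain is Mori and, being completely integrally closed, is a $v$-domain by Proposition \ref{CIC-domain}; this gives (i) $\Rightarrow$ (ii). For the converse, assume $D$ is a Mori $v$-domain and let $I$ be a nonzero ideal. By the Mori hypothesis there is a finitely generated $F \subseteq I$ with $I^{v}=F^{v}$, whence $I^{-1}=F^{-1}$; by the $v$-domain hypothesis $F$ is $v$-invertible, i.e. $(FF^{-1})^{v}=D$. Since $FF^{-1}\subseteq IF^{-1}\subseteq I^{v}F^{-1}=F^{v}F^{-1}$, applying the $v$-operation squeezes $(II^{-1})^{v}=D$; as $v=t$ in a Mori domain, $I$ is $t$-invertible. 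Thus every nonzero ideal of $D$ is $t$-invertible and $D$ is Krull.

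For (ii) $\Leftrightarrow$ (iii), I would observe that the first clause of (iii) is exactly the Mori condition, while the second is the $v$-domain condition (v) of Theorem \ref{v-domain}. If $D$ is Mori and $A\in\boldsymbol{F}(D)$, choose a finitely generated $F=(c_{1},\dots,c_{n})\subseteq A$ with $A^{v}=F^{v}$; then $A^{-1}=F^{-1}=\bigcap\{c_{i}^{-1}D \mid 1\le i\le n\}$ with $c_{i}\in A$. Conversely, applying the first clause of (iii) to a divisorial ideal $A=A^{v}$ and inverting by means of the Houston--Zafrullah identity used in the proof of Proposition \ref{v-FC-domain}, one gets $A=A^{v}=(y_{1},\dots,y_{n})^{v}$ with $y_{i}\in A$, so every divisorial ideal is of strict finite type and $D$ is Mori. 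Hence (iii) is equivalent to ``$D$ is a Mori $v$-domain'', which is (ii).

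Finally, (iv) $\Rightarrow$ (iii) is immediate, so it remains to prove (i) $\Rightarrow$ (iv), and this is where the only non-formal input enters. Given $A\in\boldsymbol{F}(D)$, I would first use the Mori property to fix a finitely generated $F\subseteq A$ with $A^{v}=F^{v}$; it then suffices to find $x,y\in F$ with $(x,y)^{v}=F^{v}$, for then $A^{-1}=F^{-1}=(x,y)^{-1}=x^{-1}D\cap y^{-1}D$ with $x,y\in A$. This is the classical statement that an ideal of a Krull domain is two-generated up to the $v$-operation, which I would establish through the approximation theorem for the height-one (essential) valuations $v_{P}$: the divisor of $F$ has finite support $S$, so one chooses $x\in F$ with $v_{P}(x)=v_{P}(F)$ for all $P\in S$, and then $y\in F$ agreeing with $v_{P}(F)$ on $S$ while vanishing at the finitely many extra places where $x$ is positive, so that $\min(v_{P}(x),v_{P}(y))=v_{P}(F)$ at every height-one $P$. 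I expect this two-generation step to be the main obstacle, since it is the one place where the divisor-theoretic structure of Krull domains, rather than mere formal manipulation of the $v$-operation together with Theorem \ref{v-domain} and Proposition \ref{v-FC-domain}, is genuinely needed.
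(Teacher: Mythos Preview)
Your proposal is correct and follows essentially the same route as the paper: (i)$\Leftrightarrow$(ii)$\Leftrightarrow$(iii) via Theorem~\ref{v-domain}(v) and the strict-finite-type characterization of Mori domains (packaged in the paper as Lemma~\ref{Mori}), together with (i)$\Rightarrow$(iv)$\Rightarrow$(iii). The only difference is that for (i)$\Rightarrow$(iv) the paper simply cites \cite[Proposition 1.3]{MZ} for the fact that $A^{v}=(x,y)^{v}$ with $x,y\in A$, whereas you sketch the approximation-theorem argument behind it; your sketch is right in spirit, though to guarantee $x,y\in A$ (rather than merely in $A^{v}$) one typically fixes any nonzero $x\in A$ and then builds $y$ as a $D$-linear combination of elements of $A$, using approximation only to choose the coefficients.
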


Before we prove Theorem \ref{Krull}, it seems pertinent to give some introduction.
For a quick review
of Krull domains,  the reader may consult the first few pages of \cite[Fossum (1973)]{Fo}. A
number of characterizations of Krull domains can be also found in  \cite[Houston-Zafrullah (1988), Theorem
2.3]{HZ}. The one that we can use here is: \emph{$D$ is a Krull domain if and
only if each $A\in \boldsymbol{F}(D)$ is $t$-invertible}. Which means, as observed above, that \emph{$D$ is a Krull
domain if and only if for each $A\in \boldsymbol{F}(D),$ $A$ is $v$-invertible and $
A^{-1}$ is a fractional $v$-ideal of finite type}. In particular, we reobtain that a Krull domain is a  a P$v$MD (and so, in particular, a $v$-domain).

An integral domain $D$ is called a \emph{Mori domain} if $D$ satisfies the ascending chain condition  on
integral divisorial ideals { (see, for instance, \cite[Barucci (2000)]{Ba})}. Different aspects of Mori domains were stu\-died
by Toshio Nishimura in a series of papers. For instance, in  \cite[Nishimura (1967), Theorem, page 2]{Ni2},  he
showed that \emph{a domain $D$ is a Krull domain if and only if $D$ is a Mori
domain and completely integrally closed}.
  For another proof of this result, see \cite[Zafrullah (1989), Corollary 2.2]{Zaf}.

 On the other hand, an integral domain $D$ is a Mori
domain if and only if, for each $A\in \boldsymbol{F}(D)$, $A^v$ is a fractional $v$-ideal of strict finite type \cite[Nishimura (1962), Lemma 1]{Ni1}. 
A variation of this characterization is given next.

\begin{lemma} \label{Mori}  Let $D$ be an integral domain. Then,  $D$ is Mori if and only if 
 for each $
A\in \boldsymbol{F}(D)$ there exist $y_{1},y_{2},...,y_{n}\in A\backslash \{0\}$, with $n \geq 1$,  such
that $A^{-1}=\bigcap \{{y_{i}}^{-1}D \mid 1\leq i \leq n\}$.
\end{lemma}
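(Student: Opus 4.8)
The plan is to prove the characterization of Mori domains by showing equivalence with the known criterion stated just before the lemma, namely that $D$ is Mori if and only if for each $A \in \boldsymbol{F}(D)$, $A^v$ is a fractional $v$-ideal of strict finite type (Nishimura's lemma). So I will establish that the displayed intersection condition is equivalent to $A^v$ being of strict finite type, for all $A \in \boldsymbol{F}(D)$. The bridge between these two formulations is dualization: the operation $A \mapsto A^{-1}$ converts finitely generated $v$-ideals into finite intersections of principal fractional ideals, and conversely.

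First I would prove the forward direction. Assume $D$ is Mori, so by Nishimura's criterion every $A^v$ is of strict finite type; in particular, applying this to $A^{-1} \in \boldsymbol{F}(D)$, there is a finitely generated $F = (z_1, \dots, z_n) \subseteq A^{-1}$ with $(A^{-1})^v = F^v$. Now use the identity $(A^{-1})^v = A^{-1}$ (recall $(A^{-1})^v = A^{-1}$, stated in the Preliminaries) to get $A^{-1} = (z_1, \dots, z_n)^v$. Then I would dualize: taking inverses and using the formula $(z_1, \dots, z_n)^{-1} = \bigcap_{i} z_i^{-1} D$ together with $((A^{-1})^v)^{-1} = (A^{-1})^{-1} = A^v$, I obtain an expression of the form $A^v = \bigcap_i w_i^{-1} D$. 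The point requiring care is to land the generators $y_i$ inside $A$ itself (the lemma demands $y_i \in A \setminus \{0\}$), which is exactly what the ``strict'' in strict finite type should buy me after the dualization is arranged correctly; I would need to track which ideal the finite generating set sits inside through the inversion.

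For the converse, I would assume the intersection condition holds for every $A \in \boldsymbol{F}(D)$ and deduce that $A^v$ is always of strict finite type, invoking Nishimura's criterion to conclude $D$ is Mori. Given $A$, apply the hypothesis to $A^{-1}$ (or to $A$ directly, whichever produces generators in the correct module) to write an inverse as a finite intersection $\bigcap_i y_i^{-1} D$ with $y_i \in A$; then dualize once more, using $\bigl(\bigcap_i y_i^{-1} D\bigr)^{-1} = (y_1, \dots, y_n)^v$ (the Houston--Zafrullah identity already cited in Proposition \ref{v-FC-domain}) to recover $A^v = (y_1, \dots, y_n)^v$ with $(y_1, \dots, y_n) \subseteq A$, which is precisely strict finite type.

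The main obstacle I anticipate is purely bookkeeping rather than conceptual: keeping the membership constraints straight under dualization, so that the finite generating set ends up inside $A$ (yielding \emph{strict} finite type) and the intersected principal ideals $y_i^{-1}D$ have their $y_i$ inside $A$ as the statement requires. The two key algebraic facts I would lean on are the inversion formula $(x_1, \dots, x_n)^{-1} = \bigcap_i x_i^{-1} D$ and its dual $\bigl(\bigcap_i x_i^{-1} D\bigr)^{-1} = (x_1, \dots, x_n)^v$, both of which are elementary and already in play earlier in the paper. With those in hand, each direction is a one-step dualization of the corresponding half of Nishimura's characterization.
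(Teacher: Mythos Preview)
Your approach is essentially the paper's: both reduce the lemma to Nishimura's strict-finite-type characterization via the dualization identities $(y_1,\dots,y_n)^{-1}=\bigcap_i y_i^{-1}D$ and $\bigl(\bigcap_i y_i^{-1}D\bigr)^{-1}=(y_1,\dots,y_n)^v$. Your converse direction matches the paper exactly.

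The only issue is the unnecessary detour in your forward direction. You apply Nishimura's criterion to $A^{-1}$, obtaining $z_i\in A^{-1}$ with $A^{-1}=(z_1,\dots,z_n)^v$, and then after inverting you land with $A^{v}=\bigcap_i z_i^{-1}D$; this is the wrong side of the duality and forces the bookkeeping problem you flag (getting the $y_i$ back into $A$ rather than $A^v$). The paper simply applies strict finite type to $A$ itself: there exist $y_1,\dots,y_n\in A$ with $A^v=(y_1,\dots,y_n)^v$, hence $A^{-1}=(A^v)^{-1}=(y_1,\dots,y_n)^{-1}=\bigcap_i y_i^{-1}D$, and the $y_i$ are already in $A$ with no further tracking needed. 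Make that one-line adjustment and your argument coincides with the paper's.
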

\begin{proof}  As we observed above,   $D$ is a Mori
domain if and only if for each $A\in \boldsymbol{F}(D)$ there exist $
y_{1},y_{2},...,y_{n}\in A\backslash \{0\}$ such that $
A^{v}=(y_{1},y_{2},...,y_{n})^{v}$. This last equality is equivalent to
$
A^{-1}=(y_{1},y_{2},...,y_{n})^{-1} = \bigcap \{{y_{i}}^{-1}D \mid 1\leq i \leq n\},$
since, by \cite[Houston-Zafrullah (1988), Lemma 1.1]{HZ}, we have 
$$
\left(\bigcap \{y_{i}^{-1}D \mid 1\leq i \leq n\}\right)^{-1} =   \left((y_{1}^{-1})^{-1},(y_{2}^{-1})^{-1},...,(y_{n}^{-1})^{-1}\right)^{v} = (y_{1},y_{2},...,y_{n})^{v}\,.
$$
\end{proof}

\emph{Proof of Theorem \ref{Krull}}. (i)$\Rightarrow$(ii)  because we already observed that a Krull domain is a CIC Mori domain. Moreover, a CIC-domain is a $v$-domain (Proposition \ref{CIC-domain}).

(ii)$\Rightarrow$(i)  We want to prove that, for each $A\in \boldsymbol{F}(D),$ $A$ is $v$-invertible and $
A^{-1}$ is a fractional $v$-ideal of finite type. The second property is a particular case of the assumption that every fractional divisorial ideal of $D$ is a $v$-ideal of finite type. For the first property, we have that,  for each $A\in \boldsymbol{F}(D)$, there exist $
F\in \boldsymbol{f}(D)$,  with $F \subseteq A$, such that $A^v = F^v$ (or, equivalently, $A^{-1} = F^{-1}$). Since $D$ is a $v$-domain, we have $D = (FF^{-1})^v = (F^vF^{-1})^v = (A^vF^{-1})^v = (AA^{-1})^v$.

 (ii)$\Leftrightarrow$(iii)  is a straightforward consequence of Lemma \ref{Mori} and Theorem \ref{v-domain}((i)$\Leftrightarrow$(v)).
 
  (iii)$\Rightarrow$(iv) follows form the fact that (iii)$\Leftrightarrow$(i) and,  if $D$ is a Krull
domain, then for every $A\in \boldsymbol{F}(D)$ there exist $x,y\in A$ such that $
A^{v}=(x,y)^{v}$ { \cite[Mott-Zafrullah (1991), Proposition 1.3]{MZ}}.  Therefore,  $A^{-1}= (x,y)^{-1} = x^{-1}D \cap y^{-1}D$.

 (iv)$\Rightarrow$(iii)  is trivial.
\hfill $\Box$

\begin{remark} \rm {\bf (a)}
 In (iii) of Theorem \ref{Krull},  we cannot say that for every $A\in \boldsymbol{F}(D)$
the inverse $A^{-1}$ is expressible as a finite intersection of principal
fractional ideals, because this would be equivalent to $A^{v}$ being of
finite type for each $A\in \boldsymbol{F}(D)$. But there do exist non-Mori domains $D$
such that $A^{v}$ is of finite type for all $A\in \boldsymbol{F}(D)$. For a discussion of those examples you may consult  \cite[Zafrullah (1986), Section 2]{Zafrullah} and \cite[Gabelli-Houston (1997), Section (4c)]{GH}.

{\bf (b)}  Note that a Mori domain is obviously  a $v$-\texttt{FC}-domain, since in a Mori domain every divisorial ideal is a $v$-ideal of (strict) finite type. Therefore,  the equivalences (i)$\Leftrightarrow$(ii) of Theorem \ref{Krull} and  of Corollary \ref{PvMD} shed new light on the relations between P$v$MD's and Krull domains in the class of $v$-domains.

\end{remark}

While several of the above results provide characterizations of  Pr\"ufer $v$-multiplica\-tion domains, $v$-domains, Mori and Krull { domains}, without using Krull's theory of star operations, they
do not diminish the importance of star operations in any way. After all, it was the star operations that developed the notions mentioned above this far. An interested reader will have to extend this work further so that mainstream techniques could be used. To make a start in that direction, we  
give below some further ``star operation free'' characterizations of P$v$MD's, besides
the ones we have already given above.
\medskip

Given an integral domain  $D$, a \emph{prime ideal} $P$ is called \emph{essential for $D$} if $D_{P}$ is a valuation domain and the \emph{domain}
$D$ is called \emph{essential} if there is a family of essential primes $\{P_{\alpha }\}$
for $D$ such that $D =\bigcap D_{P_{\alpha }}$.  Also, call a prime ideal $P$ of $D$
an \emph{associated prime of a principal ideal} if $P$ is a minimal prime over a
proper nonzero ideal of the type $((a):_{D}(b))$, for some $a, b \in D$. The associated primes of
principal ideals have been discussed in \cite[Brewer-Heinzer (1974)]{BH},
where it was also shown that if $S$ is a multiplicative set in $D$ then $
D_{S}=\bigcap D_{P}$ where $P$ ranges over the associated primes of principal
ideals disjoint from $S$ \cite[Brewer-Heinzer (1974), Proposition 4]{BH}. For brevity,  we call here an associate prime of a
principal ideal of $D$ simply an \emph{associated prime of $D$}. 

Following 
\cite[Mott-Zafrullah (1981)]{MZa}, call $D$ a \emph{\texttt{P}-domain} if every associated prime of $D$ is essential. It is easy to see that a \emph{\texttt{P}-domain is an essential domain}. More precisely, it
was shown in  \cite[Mott-Zafrullah (1981),  Proposition 1.1]{MZa} that $D$ is a  \emph{{\texttt{P}-domain} if and only if $D$ is essential
and every quotient ring of $D$ is essential}.  Also,  if $D$ is a {\texttt{P}-domain}
then so are the rings of fractions of $D$ and the rings of polynomials over $
D$ \cite[Mott-Zafrullah (1981), Corollary 1.2]{MZa}. From \cite[Mott-Zafrullah (1981), Corollary 1.4 and Example 2.1]{MZa} one can also get the information that a P$v$MD is a
{\texttt{P}-domain}, but not conversely. 

We now state a result that is already
known but that can be of use if someone wants to deal with P$v$MD's without
having to use, in statements (ii) and (iii),  the star operations.

\begin{proposition} \label{PvMD-2}
The following are equivalent for an integral domain $D$.

\begin{enumerate}
\item[(i)] $D$ is a P$v$MD.

\item[(ii)] $D$ is a {\texttt{P}-domain}-domain such that, for every pair $a,b\in D\backslash \{0\}$, $
((a)\cap (b))^{-1}$ is a finite intersection of principal fractional ideals.

\item[(ii$^\prime$)]$D$ is a {\texttt{P}-domain}-domain and a $v$-{\texttt{FC}}-domain.

\item[(iii)] $D$ is an essential domain such that, for every pair $a,b\in D\backslash
\{0\}$, $((a)\cap (b))^{-1}$ is a finite intersection of principal fractional
ideals.
\item[(iii$^\prime$)] $D$ is an essential  $v$-{\texttt{FC}}-domain.
\end{enumerate}
\end{proposition}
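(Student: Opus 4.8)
The plan is to run a short cycle (i) $\Rightarrow$ (ii$'$) $\Rightarrow$ (iii$'$) $\Rightarrow$ (i) and attach (ii), (iii) to their primed versions. First I would dispose of the two ``primed'' equivalences, which are essentially free: by Proposition \ref{v-FC-domain}, the requirement that $((a)\cap(b))^{-1}$ be a finite intersection of principal fractional ideals for every pair $a,b\in D\setminus\{0\}$ is \emph{exactly} the statement that $D$ is a $v$-\texttt{FC}-domain. Hence (ii) $\Leftrightarrow$ (ii$'$) and (iii) $\Leftrightarrow$ (iii$'$) are immediate, and it remains to prove (i) $\Leftrightarrow$ (ii$'$) $\Leftrightarrow$ (iii$'$).

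The two easy implications I would handle next. For (i) $\Rightarrow$ (ii$'$): if $D$ is a P$v$MD then $D$ is a \texttt{P}-domain (recalled in the text from \cite{MZa}), and by Corollary \ref{PvMD} a P$v$MD is a $v$-\texttt{FC}-domain, so (ii$'$) holds. For (ii$'$) $\Rightarrow$ (iii$'$): a \texttt{P}-domain is essential (noted in the text), and the $v$-\texttt{FC} hypothesis is shared, so this is nothing more than weakening ``\texttt{P}-domain'' to ``essential''.

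The crux is (iii$'$) $\Rightarrow$ (i). By Corollary \ref{PvMD}, given the $v$-\texttt{FC} hypothesis it suffices to show that an essential domain is a $v$-domain (a known fact, but I would include the short argument). Write $D=\bigcap_\alpha D_{P_\alpha}$ with each $D_{P_\alpha}$ a valuation domain, and fix $F\in\boldsymbol{f}(D)$. Since $F$ is finitely generated, the inverse commutes with localization, i.e.\ $F^{-1}D_{P_\alpha}=(D_{P_\alpha}:FD_{P_\alpha})$; as $FD_{P_\alpha}$ is a finitely generated ideal of a valuation domain it is principal, hence invertible, so $(FF^{-1})D_{P_\alpha}=(FD_{P_\alpha})(FD_{P_\alpha})^{-1}=D_{P_\alpha}$ for every $\alpha$. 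Now if $x\in(FF^{-1})^{-1}$ then $x(FF^{-1})\subseteq D\subseteq D_{P_\alpha}$, whence $xD_{P_\alpha}=x(FF^{-1})D_{P_\alpha}\subseteq D_{P_\alpha}$ and so $x\in D_{P_\alpha}$ for every $\alpha$; intersecting gives $x\in D$. Therefore $(FF^{-1})^{-1}=D$ and $(FF^{-1})^v=D$, so every $F\in\boldsymbol{f}(D)$ is $v$-invertible and $D$ is a $v$-domain. Combined with the $v$-\texttt{FC} assumption and Corollary \ref{PvMD}, this yields (i) and closes the cycle.

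The main obstacle is the localization step in (iii$'$) $\Rightarrow$ (i): justifying $F^{-1}D_{P_\alpha}=(D_{P_\alpha}:FD_{P_\alpha})$, which is precisely where finite generation of $F$ is essential (the colon/inverse commutes with localization only for finitely generated ideals, and the inclusion ``$\supseteq$'' must be checked by clearing a common denominator from $S=D\setminus P_\alpha$). Once this small commutation fact is in place, the valuation structure forces $(FF^{-1})D_{P_\alpha}=D_{P_\alpha}$ and the representation $D=\bigcap_\alpha D_{P_\alpha}$ does the rest, so I would isolate it as a one-line preliminary observation before the main computation.
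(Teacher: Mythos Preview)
Your proposal is correct and follows essentially the same cycle as the paper's proof: the primed/unprimed equivalences via Proposition~\ref{v-FC-domain}, the implications (i)$\Rightarrow$(ii$'$)$\Rightarrow$(iii$'$) from the cited facts on \texttt{P}-domains, and the closing step via Corollary~\ref{PvMD} once ``essential $\Rightarrow$ $v$-domain'' is established. The only difference is that the paper simply cites \cite[Kang (1989), Lemma 3.1]{Kang} for this last fact, whereas you supply a short direct argument using localization of $F^{-1}$ at the essential primes; your argument is correct and makes the proof self-contained, but structurally the approaches coincide.
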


\begin{proof} As we already mentioned above, from \cite[Mott-Zafrullah (1981)]{MZa} we  know that a P$v$MD is a
 {\texttt{P}-domain}  and that a  {\texttt{P}-domain}  is essential. 
 Moreover, from Corollary \ref{PvMD},  if $D$ is a P$v$MD, we have, for every pair $a,b\in
D\backslash \{0\}$,  that $((a)\cap (b))^{-1}$ is a finite intersection of
principal fractional ideals  (or, equivalently, $D$ is a $v$-{\texttt{FC}}-domain, by Proposition \ref{v-FC-domain}). Therefore, (i)$\Rightarrow$(ii) $\Rightarrow$(iii),  (ii)$\Leftrightarrow$(ii$^\prime$) and (iii)$\Leftrightarrow$(iii$^\prime$).

 (iii)$\Rightarrow$(ii). Recall that, from   \cite[Kang (1989), Lemma 3.1]{Kang}, we have that an essential domain is a $v$-domain { (the reader may also want to consult the survey paper \cite[Fontana-Zafrullah (2009), Proposition 2.1]{FZ} and, for stricly related results, \cite[Zafrullah (1988), Lemma 4.5]{Zafr} and \cite[Zafrullah (1987), Theorem 3.1 and Corollary 3.2]{Za-87})}.  The conclusion follows from Corollary \ref{PvMD}((ii)$\Rightarrow$(i)) (and Proposition \ref{v-FC-domain}).
\end{proof}


\begin{remark} \rm   Note that, from the proof of Proposition \ref{PvMD-2}((iii)$\Rightarrow$(ii)), we have that each of the statements of Proposition \ref{PvMD-2} is equivalent to
\begin{enumerate}
 \item[(iv)] \emph{$D$ is a $v$-domain such that, for every pair $a,b\in D\backslash \{0\}$,
$((a)\cap (b))^{-1}$ is a finite intersection of principal fractional ideals}.
\end{enumerate}
which is obviously also equivalent to (ii) of Corollary \ref{PvMD}.
\end{remark}


\medskip 

\noindent {\bf Acknowledgment.} We are thankful to the referee for his/her contribution to the improvement of the presentation of this paper.


\end{document}